\newtheorem{Thm}{Theorem}[section]
\newtheorem{Lem}{Lemma}[section]
\newtheorem{Prop}{Proposition}[section]
\newtheorem{Cor}{Corollary}[section]
\newtheorem{Conj}{Conjecture}[section]
\newtheorem{Rem}{Remark}[section]
\newtheorem{Def}{Definition}[section]
\theoremstyle{plain}
\newtheorem*{Ass*}{Assumption}
\renewcommand{\ker}{\text{ker}}
\def\cit{{\mathbb C}}
\def\qit{{\mathbb Q}}
\def\zit{{\mathbb Z}}
\def\0{{\mathcal O}}
\def\Hom{\mathop{\rm Hom}\nolimits}
\def\End{\mathop{\rm End}\nolimits}
\def\h{{\mathfrak h}}
\def\C{{\mathcal C}}
\def\h{{\mathfrak h}}
\def\M{{\mathcal M}}
\def\A{{\mathcal A}}
\begin{document}
\title[Some nilpotence theorems]{Some nilpotence theorems for algebraic cycles}

\author{H. Anthony Diaz}

\maketitle

\begin{abstract}
Using fundamental results of Deligne, we prove a nilpotence theorem for algebraic cycles and use this to prove a torsion nilpotence result for correspondences on surfaces.
 
\end{abstract}
\date{}

\section*{Introduction}

\noindent We let $k$ be a field and $\M_{k, R}$ be the category of Chow motives over $k$ with $R$ coefficients. For $X$ a smooth projective variety of dimension $d$ over $k$ and $\h(X) \in \M_{k, R}$ the corresponding Chow motive, recall that one has by definition 
\[\End_{\M_{k, R}} (\h(X)) = CH^{d} (X \times X)_{R}  \]
Unless otherwise specified, we will work with $\ell$-adic cohomology (for $\ell$ prime to the characteristic of $k$). For $X$ smooth and projective over $k$ there is a cycle class map: 
\begin{equation}CH^{r} (X)_{R} \to H^{2r}_{\text{\'et}} (X\times_{k} \overline{k}, R_{\ell}(r)))\label{cycle-class-map} \end{equation}
(where $R = \zit, \qit$), which gives homological equivalence for algebraic cycles, $\sim_{hom}$. Since the cycle class map is generally not injective, there are conjectures about the kernel, such as Voevodsky's smash-nilpotence conjecture for general codimension and the following nilpotence conjecture in codimension $d =dim(X)$ (see, for instance, \cite{J}):
\begin{Conj}\label{nil} For a Chow motive $M \in \M_{k, R}$ and $\Gamma \in \End_{\M_{k, R}} (M)$ such that $\Gamma \sim_{hom} 0$, there exists some $n>>0$ for which $\Gamma^{n} =0 \in \End_{\M_{k, R}} (M)$.
\end{Conj}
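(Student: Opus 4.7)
The plan is first to reduce the conjecture to the case $M=\h(X)$ for $X$ smooth projective of dimension $d$. Every object $M\in\M_{k,R}$ is by definition a direct summand of some $\h(X)(m)$, cut out by a projector $p=\iota\circ\pi$ with $\pi\circ\iota=\mathrm{id}_M$, and Tate twists do not affect nilpotence. Given a homologically trivial $\Gamma\in\End_{\M_{k,R}}(M)$, the lift $\tilde\Gamma:=\iota\circ\Gamma\circ\pi\in\End_{\M_{k,R}}(\h(X))$ is again homologically trivial since the cycle class map is a ring homomorphism, and an easy induction using $\pi\circ\iota=\mathrm{id}_M$ gives $\tilde\Gamma^{n}=\iota\circ\Gamma^n\circ\pi$. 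Thus $\tilde\Gamma$ is nilpotent iff $\Gamma$ is, and it suffices to treat $\Gamma\in CH^d(X\times X)_R$ with $\Gamma\sim_{hom}0$.

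Second, I would invoke a K\"unneth decomposition. Deligne's proof of the Weil conjectures, combined with work of Lieberman, Katz--Messing, and Murre, yields algebraic K\"unneth projectors $\pi_0,\dots,\pi_{2d}\in CH^d(X\times X)_\qit$ in many situations (curves, surfaces, abelian varieties, and in general the extreme projectors $\pi_0,\pi_1,\pi_{2d-1},\pi_{2d}$). When present, they give $\h(X)=\bigoplus_i\h^i(X)$ and decompose $\Gamma$ as a matrix $(\Gamma_{ij})$ with $\Gamma_{ij}\in\Hom(\h^j(X),\h^i(X))$. A weight count shows that any $\Gamma^N$ with $N$ exceeding the number of nonzero weights is concentrated in its diagonal blocks, so it is enough to prove nilpotence of each $\Gamma_{ii}\in\End_{\M_{k,R}}(\h^i(X))$.

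Third, for each $i$ I would try to deduce nilpotence of $\Gamma_{ii}$ from a finite-dimensionality input in the sense of Kimura--O'Sullivan, which ensures that the kernel of the cycle class map on $\End(\h^i(X))$ is a nil ideal. In the cases where $\h^i(X)$ is known to be evenly or oddly finite-dimensional -- the extreme pieces $i=0,1,2d-1,2d$, all pieces when $X$ is an abelian variety, and all but the transcendental middle piece on a surface -- this step goes through and the conjecture follows.

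The main obstacle is precisely this last step: finite-dimensionality of general Chow motives is open, and so is the full conjecture. Any unconditional argument must therefore specialize to situations in which an alternative input -- Bloch--Srinivas decomposition of the diagonal, Roitman-type control of torsion $0$-cycles, or weight arguments from \'etale cohomology in Deligne's spirit -- can be substituted for finite-dimensionality. Bringing such Deligne-theoretic tools to bear on the residual transcendental piece, and extracting from them a nilpotence statement adequate for the torsion correspondences on surfaces promised in the abstract, is the real content I expect to have to supply.
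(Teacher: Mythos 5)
The statement you were asked to prove is labeled as a \emph{Conjecture} in the paper, and the paper does not claim to prove it. Indeed, the author explicitly remarks that Conjecture~\ref{nil} ``seems to be known only in the case of smooth projective varieties whose motives are summands of motives of Abelian varieties'' and that ``because of the intractability of this problem, our aim will be to prove a nilpotence result which is substantially weaker.'' So there is no proof in the paper to compare against; the actual contribution is Theorem~\ref{main}, which replaces genuine Chow-theoretic nilpotence by $k$-homological nilpotence (vanishing under the cycle class map to $H^{2d}_{\text{\'et}}(X\times X,\qit_\ell(d))$ over $k$ rather than over $\overline k$). You correctly sensed this, since your last paragraph concedes that finite-dimensionality is open and so is the conjecture; but as written your first three steps read as though they were proof steps, and they are not. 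Step two already relies on the K\"unneth standard conjecture, and the assertion that ``a weight count shows that any $\Gamma^N$ with $N$ exceeding the number of nonzero weights is concentrated in its diagonal blocks'' is not correct as stated: the off-diagonal components $\Gamma_{ij}$ need not vanish under composition, and there is no a priori triangular structure forcing them to die in high powers.

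What the paper actually does is orthogonal to the Kimura--O'Sullivan route. Rather than trying to split $\h(X)$ into motivic K\"unneth pieces inside $\M_{k,\qit}$ (which is conjectural), it applies Deligne's derived-category decomposition theorem (\cite{D}, quoted as Theorem~\ref{Deligne}) to obtain a genuine isomorphism $R\pi_*\qit_\ell\cong\bigoplus_i R^i\pi_*\qit_\ell[-i]$ in $D^b(\mathrm{Spec}\,k,\qit_\ell)$, courtesy of Hard Lefschetz. The Andr\'e--Kahn nilpotence Lemma~\ref{key} then shows that the kernel of $\End_{D}(R\pi_*\qit_\ell)\to\bigoplus_i\End(R^i\pi_*\qit_\ell)$ is a nil ideal, and this is exactly the kernel of the $\overline k$-cycle class map composed with the $k$-cycle class map. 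This yields nilpotence of $[\Gamma]$ in $H^{2d}_{\text{\'et}}(X\times X,\qit_\ell(d))$, not in $CH^d(X\times X)_\qit$, which is why the paper only claims the weaker Theorem~\ref{main}. If you want to engage with the paper's actual argument, the statement to target is Theorem~\ref{main}, not Conjecture~\ref{nil}.
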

\noindent For $R = \qit$, the above conjecture is well-known and is of particular interest in determining whether or not the category of Chow motives possesses any {\em phantom motives} (i.e., Chow motives with vanishing cohomology). While several important conjectures in the theory of Chow motives imply this conjecture, it seems to be known only in the case of smooth projective varieties whose motives are summands of motives of Abelian varieties (see \cite{J}). Because of the intractibility of this problem, our aim will be to prove a nilpotence result which is substantially weaker. For this, we have the cycle class map:
\begin{equation}CH^{r} (X)_{R}\to H^{2r}_{\text{\'et}} (X, R_{\ell}(r))\label{cycle-class-map-k} \end{equation}
where again $R = \zit, \qit$. This is defined analogously to (\ref{cycle-class-map}) by means of the long exact sequence of a pair. Then, we have the following equivalence relation:
\begin{Def} Given $\gamma \in CH^{r} (X)_{R}$, we say that $\gamma$ is {\em $k$-homologically trivial with respect to $\ell$} and write $\gamma \sim_{k-hom} 0$ if $\gamma$ lies in the kernel of (\ref{cycle-class-map-k}), where $R= \zit, \qit$.
\end{Def}
\noindent We should note that a priori both $\sim_{hom}$ and $\sim_{k-hom}$ depends on the choice of $\ell$. On the other hand, for $R = \qit$ (which will be our primary interest here) the notion of nilpotence modulo $\sim_{hom}$ does not depend on $\ell$. When $\text{char } k =0$, this is clear. When $\text{char } k >0$, this follows from the fact that the characteristic polynomial of a correspondence is independent of $\ell$. Indeed, one quickly reduces to the case where $k$ is finitely generated over a finite field and, by a specialization argument, eventually one can to reduce to the case of a finite field, where it follows by \cite{KM} Theorem 2. 
\begin{Thm}\label{main} Let $X$ be a smooth projective variety of dimension $d$ over a field $k$.
\begin{enumerate}[label=(\alph*)]
\item\label{main-a} If $\Gamma \in \End_{\M_{k, \qit}}(\h(X))$ is homologically nilpotent, then $\Gamma$ is $k$-homologically nilpotent.
\item\label{main-b} Suppose $H^{*} (X_{\overline k}, \zit_{\ell})$ is torsion-free; suppose also that $k$ has characteristic $0$ or that $X$ satisfies the Lefschetz standard conjecture. If $\Gamma \in \End_{\M_{k}}(\h(X))$ is homologically nilpotent, then $\Gamma$ is $k$-homologically nilpotent.
\end{enumerate}
In fact, in both cases one can find some $N>>0$ for which $\Gamma^{N} \sim_{k-hom} 0$ for all $\ell$.
\end{Thm}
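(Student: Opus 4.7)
The plan is to use the Hochschild--Serre spectral sequence
\[
E_2^{p,q} = H^p(\mathrm{Gal}(\bar k/k), H^q(Y_{\bar k}, \mathbb{Q}_\ell(d))) \Rightarrow H^{p+q}(Y, \mathbb{Q}_\ell(d))
\]
for $Y := X \times X$, together with Deligne's theorem that it degenerates at $E_2$ when $Y$ is smooth projective over $k$. Degeneration yields a finite decreasing filtration $F^\bullet$ on $H^{2d}(Y, \mathbb{Q}_\ell(d))$ with $F^{2d+1} = 0$ (since $H^q(Y_{\bar k})$ vanishes for $q<0$), and a class $[\Gamma] \in H^{2d}(Y, \mathbb{Q}_\ell(d))$ belongs to $F^1$ precisely when its image in $H^{2d}(Y_{\bar k}, \mathbb{Q}_\ell(d))$ vanishes, i.e.\ precisely when $\Gamma \sim_{hom} 0$.

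The central claim is a multiplicativity property: $F^a \circ F^b \subset F^{a+b}$ under composition of correspondences. Writing $\alpha \circ \beta = p_{13*}(p_{12}^*\beta \cup p_{23}^*\alpha)$ on $X \times X \times X$, this reduces to three standard compatibilities of the Hochschild--Serre filtration: the smooth pullbacks $p_{ij}^*$ preserve $F^\bullet$ by functoriality of the spectral sequence; cup product satisfies $F^a \cdot F^b \subset F^{a+b}$; and the proper pushforward $p_{13*}$ preserves the Galois cohomological degree $p$, since on the $E_2$ page it is induced by the Galois-equivariant Gysin map on geometric fiber cohomology. Granting this, if $\Gamma^n \sim_{hom} 0$ (with $n$ independent of $\ell$, as observed in the introduction), then $[\Gamma^n] \in F^1$, so $[\Gamma^{n(2d+1)}] = [\Gamma^n]^{\circ(2d+1)} \in F^{2d+1} = 0$, giving $\Gamma^N \sim_{k-hom} 0$ for $N = n(2d+1)$ uniformly in $\ell$. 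This proves (a).

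Part (b) uses the same argument with $\mathbb{Z}_\ell$ coefficients: the torsion-freeness hypothesis on $H^*(X_{\bar k}, \mathbb{Z}_\ell)$ passes to $Y_{\bar k}$ by K\"unneth and makes $\mathbb{Z}_\ell$- and $\mathbb{Q}_\ell$-homological equivalence coincide, so $n$ is again uniform in $\ell$. The role of the additional hypothesis (characteristic zero, or the Lefschetz standard conjecture in characteristic $p$) is to upgrade Deligne's degeneration to $\mathbb{Z}_\ell$ coefficients: in characteristic zero one invokes the comparison with Betti cohomology and the Hodge-theoretic argument, and in positive characteristic the Lefschetz standard conjecture furnishes algebraic K\"unneth projectors that, combined with torsion-freeness, eliminate the higher differentials integrally.

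I expect the main obstacle to be the compatibility of the filtration with pushforward: while the pullback and cup-product statements are formal from the naturality and multiplicativity of Leray-type spectral sequences, the pushforward compatibility requires identifying $p_{13*}$ on the $E_2$ page with the Galois-equivariant Gysin map via proper smooth base change. A secondary delicate point, relevant only to (b), is verifying that the torsion-freeness and the Lefschetz hypothesis together genuinely rule out all higher differentials in the integral spectral sequence.
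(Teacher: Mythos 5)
Your argument is correct, and it is essentially a reformulation of the paper's proof: both hinge on Deligne's degeneration of the Hochschild--Serre spectral sequence, and both exploit the same combinatorial fact that composing ``degree-shifting'' pieces eventually lands in zero. The difference is in the packaging. The paper translates the problem into the derived category: it identifies $\End_{\M_{k,\qit}}(\h(X))$ with $\End_{D^b(\mathrm{Spec}\,k,\qit_\ell)}(R\pi_{X*}\qit_\ell)$ via the K\"unneth/duality isomorphisms of diagram (\ref{big-commutative}), decomposes $R\pi_{X*}\qit_\ell\cong\bigoplus_i R^i\pi_{X*}\qit_\ell[-i]$ using Theorem \ref{Deligne}, and then invokes a formal lemma (Lemma \ref{key}, due to Kahn, following \cite{AKOS}) showing that the kernel of $\End_D(R\pi_*A)\to\bigoplus_i\End(R^i\pi_*A)$ is a nil ideal. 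You instead stay on the cohomology side, using the filtration on $H^{2d}(X\times X,\qit_\ell(d))$ coming from the degenerate spectral sequence and proving directly that $F^a\circ F^b\subset F^{a+b}$ for the composition product of correspondences. This buys a somewhat more ``hands-on'' proof and an explicit bound $N=n(2d+1)$, at the cost of having to establish the multiplicativity of the filtration yourself rather than quoting the abstract nil-ideal lemma. The obstacle you flag about $p_{13*}$ is in fact benign: the pushforward is realized by a Gysin morphism $R\pi_{X^3*}\qit_\ell\to R\pi_{X^2*}\qit_\ell(-d)[-2d]$ in $D^b(\mathrm{Spec}\,k,\qit_\ell)$, hence induces a morphism of Hochschild--Serre spectral sequences and preserves the filtration level; together with the standard multiplicativity of the Leray filtration under cup product and its functoriality under pullback, your key claim is secure.

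One place where you are vaguer than the paper is part (b). The paper's Proposition \ref{lef-mod-tors} establishes the integral decomposition of $R\pi_*\zit_\ell$ by showing the Lefschetz map is an isomorphism modulo torsion for all but finitely many $\ell$: injectivity from Hard Lefschetz over $\qit_\ell$, surjectivity by clearing denominators in the $\Lambda$ operator (algebraic under the Lefschetz standard conjecture in characteristic $p$, Hodge-theoretic over $\cit$). Your phrase ``algebraic K\"unneth projectors'' points at a different consequence of the standard conjecture than what is actually used; the precise mechanism is the denominator-clearing argument on $\Lambda$. This is an imprecision rather than a gap, but you should make it exact if you write this up.
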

\begin{Cor} For any $\Gamma \in \End_{\M_{k, \qit}}(\h(X))$, there exists a polynomial $p[t] \in \qit[t]$ such that $p(\Gamma) \sim_{k-hom} 0$ with respect to every $\ell$.
\begin{proof} Let $q[t] \in \qit[t]$ be the characteristic polynomial of $[\Gamma]$ on $H^{*}_{\text{\'et}} (X_{\overline{k}}, \qit_{\ell})$. Then, set $\Gamma' := q(\Gamma)$ and we see that $\Gamma' \sim_{hom} 0$ with respect to every $\ell$. It follows from Theorem \ref{main} that there is some $N>>0$ for which $(\Gamma')^{N} \sim_{k-hom} 0$ with respect to every $\ell$.
\end{proof}
\end{Cor}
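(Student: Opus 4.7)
The plan is to reduce the statement to Theorem~\ref{main}(a) via Cayley--Hamilton. The induced endomorphism $[\Gamma]$ of the finite-dimensional $\qit_\ell$-vector space $H^{*}_{\text{\'et}}(X_{\overline{k}}, \qit_\ell)$ has a characteristic polynomial $q(t)$, and Cayley--Hamilton forces $q([\Gamma]) = 0$; in other words, the correspondence $q(\Gamma)$ is homologically trivial.

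The structural input is the remark made just before Theorem~\ref{main}: for $R = \qit$, the characteristic polynomial of a correspondence is independent of $\ell$, so in fact $q(t) \in \qit[t]$ and the same polynomial witnesses $q(\Gamma) \sim_{hom} 0$ for every prime $\ell$ coprime to the characteristic of $k$. In particular $q(\Gamma)$ is (trivially) homologically nilpotent, with nilpotence exponent $1$.

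Now one invokes Theorem~\ref{main}(a), whose final sentence promises an exponent $N \gg 0$, independent of $\ell$, such that $q(\Gamma)^{N} \sim_{k-hom} 0$ for every $\ell$. Setting $p(t) := q(t)^{N} \in \qit[t]$ then yields the desired polynomial. The only real obstacle here is exactly the content of Theorem~\ref{main}, namely the passage from the geometric condition $\sim_{hom}$ to the arithmetic condition $\sim_{k-hom}$ controlled by the Hochschild--Serre contributions from the Galois action on $H^{*}_{\text{\'et}}(X_{\overline{k}}, \qit_\ell)$; granted that theorem, the corollary is essentially immediate from Cayley--Hamilton together with the $\ell$-independence of characteristic polynomials.
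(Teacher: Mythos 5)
Your proposal is correct and is essentially the same argument as the paper's: take the characteristic polynomial $q$ of $[\Gamma]$ (rational and $\ell$-independent, as noted in the remark preceding Theorem~\ref{main}), observe via Cayley--Hamilton that $q(\Gamma)\sim_{hom}0$ for every $\ell$, and then apply the uniform-in-$\ell$ part of Theorem~\ref{main}\ref{main-a} to get $p = q^N$. You merely spell out the Cayley--Hamilton and $\ell$-independence steps that the paper leaves implicit.
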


\noindent The key idea in the proof of Theorem \ref{main} is based on a fundamental decomposition result of Deligne for derived categories in his thesis \cite{D}, as well as his Hard Lefschetz theorem for $\ell$-adic cohomology. We can also use this idea to consider the nilpotence problem for torsion cycles. For $(n, \text{char } k) = 1$, consider the cycle class map for $n$-torsion cycles:
\begin{equation} CH^{d} (X \times X)[n] \to H^{2d}_{\text{\'et}} (X_{\overline{k}}\times X_{\overline{k}}, \zit/n(d))\label{n-tors-cycle}  \end{equation}
Then, we have the following torsion version of Conjecture \ref{nil}:
\begin{Conj}\label{tor-nil} If $\Gamma \in CH^{d} (X \times X)[n]$ is in the kernel of (\ref{n-tors-cycle}), then $\Gamma$ is a nilpotent correspondence.
\end{Conj}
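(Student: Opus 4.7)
The plan is to leverage Theorem \ref{main}(b) and the hypothesis $(n,\operatorname{char} k)=1$ to reduce the torsion nilpotence problem to a question about injectivity of cycle class maps on torsion in $CH^{2}$ of the fourfold $X\times X$, which in the surface case is controlled by Merkurjev--Suslin / Bloch--Kato.

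First, by primary decomposition one reduces to the case $n=\ell^{m}$ with $\ell\neq\operatorname{char} k$. Under the natural assumption that $H^{*}(X_{\overline{k}},\zit_{\ell})$ is torsion-free (satisfied for the surfaces of primary interest, e.g. $K3$, abelian, or rational surfaces), the integral cycle class of any $\ell^{m}$-torsion element of $CH^{d}(X\times X)$ automatically vanishes in $H^{2d}(X_{\overline{k}}^{2},\zit_{\ell}(d))$, since it is a torsion element of a torsion-free $\zit_{\ell}$-module. Hence $\Gamma \sim_{hom} 0$, which places us squarely in the hypothesis of Theorem \ref{main}\ref{main-b}; one obtains an integer $N$ such that $\Gamma^{N}\sim_{k-hom} 0$ with respect to every $\ell$, and $\Gamma^{N}$ is again $\ell^{m}$-torsion.

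The remaining task, which is the heart of the matter in the surface case $d=2$, is to upgrade $k$-homological triviality of the torsion correspondence $\Gamma^{N}$ to actual vanishing in $CH^{2}(X\times X)$. Concretely, one must show that the mod-$\ell^{m}$ cycle class map
\[ CH^{2}(X\times X)[\ell^{m}] \to H^{4}_{\text{\'et}}(X\times X,\zit/\ell^{m}(2)) \]
is injective on the classes produced above. The analogous statement for an individual smooth projective surface $S$, namely injectivity of $CH^{2}(S)[\ell^{m}]\hookrightarrow H^{3}_{\text{\'et}}(S,\zit/\ell^{m}(2))$, is a consequence of Merkurjev--Suslin (via the Bloch--Ogus/Gersten resolution). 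One would combine this with a Bloch--Kato / Bloch--Srinivas decomposition of torsion in $CH^{2}$ of the fourfold $X\times X$ along the two factors, reducing the injectivity statement to that on the factors; descending from $\overline{k}$ to $k$ is exactly what Theorem \ref{main} already handles.

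The main obstacle is this last step: controlling torsion in $CH^{2}(X\times X)$ by cohomological invariants in a manner compatible with composition of correspondences. One expects the Bloch--Srinivas decomposition of the diagonal to reduce torsion correspondences on $X\times X$ to data coming from $\operatorname{Pic}(X)$ and from the Albanese kernel, each of which is controlled mod $\ell^{m}$ by Merkurjev--Suslin; assembling these controls carefully, together with the $k$-homological triviality supplied by Theorem \ref{main}, should force $\Gamma^{N}=0$ and establish Conjecture \ref{tor-nil} for surfaces.
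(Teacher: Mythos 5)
Your proposal correctly identifies several of the ingredients — that a torsion correspondence has trivial geometric cycle class once $H^{*}(X_{\overline{k}},\zit_{\ell})$ is torsion-free (the paper replaces your blanket hypothesis with Gabber's theorem, which gives torsion-freeness for all but finitely many $\ell$, hence the ``all but finitely many primes'' in Theorem \ref{main-2}); that Theorem \ref{main}\ref{main-b} then produces a power $\Gamma^{N}$ with vanishing $k$-cohomological cycle class; and that Merkurjev--Suslin is the ultimate source of injectivity on torsion. But the step you flag as ``the main obstacle'' is precisely where the real work lies, and the route you sketch there does not match the paper and would not straightforwardly work.

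First, a missing reduction: the paper begins by invoking the Rost nilpotence theorem for surfaces (Gille, Rosenschon--Sawant) to dispose of the part of $\Gamma$ killed by extension of scalars to $\overline{k}$, thereby reducing the whole problem to showing that $\Gamma_{\overline{k}}\in CH^{2}(X_{\overline{k}}\times X_{\overline{k}})[\ell^{\infty}]$ is nilpotent, and then further to $k$ finitely generated via rigidity. Your proposal never leaves the base field $k$, and tries instead to prove injectivity of $CH^{2}(X\times X)[\ell^{m}]\to H^{4}_{\text{\'et}}(X\times X,\zit/\ell^{m}(2))$ directly; this is both a different statement and considerably stronger than what is needed or established.

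Second, the bridge to Merkurjev--Suslin is not a Bloch--Srinivas decomposition along the two factors. Bloch's cycle class map $cl_{B}\colon CH^{2}(Y_{\overline{k}})[\ell^{\infty}]\to H^{3}_{\text{\'et}}(Y_{\overline{k}},\qit_{\ell}/\zit_{\ell}(2))$ is injective for \emph{any} smooth projective $Y$ over an algebraically closed field (here $Y=X\times X$ is a fourfold), so no reduction to the factors is needed or attempted. What the paper actually proves is the commutativity of the square (\ref{composition}): the composite of the cycle class map $CH^{2}(Y_{\overline{k}})[\ell^{\infty}]\to\varinjlim H^{4}_{\text{\'et}}(Y_{k'},\zit_{\ell}(2))_{0}$ with the Abel--Jacobi map coming from the Hochschild--Serre spectral sequence factors through $cl_{B}$ and the boundary map $\partial$, and $\partial$ is an isomorphism by Schoen's weight argument. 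This compatibility between the $K$-theoretic construction of $cl_{B}$ and the edge maps of the Hochschild--Serre spectral sequence (established via the auxiliary diagrams (\ref{big-CD})--(\ref{key-CD})) is the concrete content replacing your appeal to a decomposition of the diagonal, and it is not something that falls out of formal nonsense. As written, your argument leaves this gap open, and the Bloch--Srinivas route would also have to contend with the fact that the torsion in $CH^{2}$ of a fourfold does not cleanly localize on the two surface factors.
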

\noindent We note, in particular, that if the $\ell$-adic cohomology of $X$ is torsion-free for $\ell \neq \text{char } k$, then this conjecture asks whether every $n$-torsion cycle is nilpotent for $(n, \text{char } k) = 1$. Since $CH^{d} (X \times X)$ has no torsion in the case that $X$ has dimension $1$, the question only applies in dimension $\geq 2$. In this direction, we have the following partial result:
\begin{Thm}\label{main-2} Suppose $X$ is a surface over a perfect field $k$. Then, Conjecture \ref{tor-nil} holds for $n = \ell^{r}$ for all but finitely many primes $\ell$.
\end{Thm}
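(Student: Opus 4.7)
My plan is to combine the reduction afforded by Theorem~\ref{main}(b) with the Chow--K\"unneth decomposition available for surfaces.

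First I would reduce to the hypotheses of Theorem~\ref{main}(b). For any smooth projective variety, the integral $\ell$-adic cohomology is torsion-free for all but finitely many primes $\ell$; by the K\"unneth formula the same is then true of $X \times X$. Fix such an $\ell$ and let $\Gamma \in CH^{2}(X \times X)[\ell^r]$ lie in the kernel of the mod $\ell^r$ cycle class map. Then the integral cycle class of $\Gamma$ in $H^{4}((X\times X)_{\overline{k}}, \zit_\ell(2))$ is $\ell^r$-torsion in a torsion-free group, hence vanishes. For any other prime $\ell' \neq \ell$, the $\ell^r$-torsion class in $H^{4}((X\times X)_{\overline{k}}, \zit_{\ell'}(2))$ vanishes automatically, since $\ell^r$ is a unit in $\zit_{\ell'}$. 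Thus $\Gamma \sim_{hom} 0$ with integer coefficients for every prime. Since $X$ is a surface the Lefschetz standard conjecture for $X$ holds trivially, and Theorem~\ref{main}(b) yields $N$ with $\Gamma^N \sim_{k-hom} 0$ for all $\ell$.

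The heart of the argument is to upgrade this $k$-homological triviality to actual nilpotence of $\Gamma^N$ as an element of $\End_{\M_k}(\h(X))$. For this I would invoke Murre's integral Chow--K\"unneth decomposition $\h(X) = \bigoplus_{i=0}^{4} \h^i(X)$ for surfaces, together with the refined splitting $\h^2(X) = \h^2_{\text{alg}}(X) \oplus t_2(X)$ isolating the transcendental motive. The endomorphism rings of $\h^0, \h^1, \h^2_{\text{alg}}, \h^3, \h^4$ are all torsion-free (being $\zit$, endomorphism rings of abelian varieties, or matrix rings over $\zit$), so any torsion element of $\End_{\M_k}(\h(X))$ has its diagonal Chow--K\"unneth components concentrated in $\End(t_2(X))$. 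Roitman's theorem bounds the $\ell$-torsion of the Albanese kernel for $\ell \neq \text{char}\, k$, and I would use this together with the $k$-homological triviality of $\Gamma^N$ to force the $t_2$-diagonal component to vanish.

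I expect the main obstacle to be propagating this vanishing of diagonal components through composition. Unlike a strictly triangular matrix, the off-diagonal Chow--K\"unneth components $\pi_i \Gamma^N \pi_j$ (with $i \neq j$) do not automatically form a nilpotent ideal: a loop $\h^i \to \h^j \to \h^i$ produces a diagonal contribution in $\End(\h^i)$. The plan is to iterate the decomposition, using that each such loop is either torsion in a torsion-free ring (hence zero) or a torsion element of $\End(t_2(X))$ controlled by Roitman, to conclude $\Gamma^M = 0$ for a suitable $M$.
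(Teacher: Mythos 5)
Your reduction to Theorem~\ref{main}\ref{main-b} is essentially the same as the first step of the paper's argument, but everything after that diverges, and the divergence is where the real content lies. The paper never attempts to split $\h(X)$ and track torsion through a Chow--K\"unneth decomposition. Instead it proves an \emph{injectivity} statement for the torsion cycle class map and deduces nilpotence from $\sim_{k\text{-}hom}$-triviality directly. Concretely, after first invoking the Rost nilpotence theorem for surfaces (Gille, Rosenschon--Sawant) to reduce to cycles over $\overline{k}$, and then Suslin rigidity to reduce to $k$ finitely generated, the paper shows that for all but finitely many $\ell$ the map
\[
CH^{2}(X_{\overline{k}}\times X_{\overline{k}})[\ell^{\infty}] \longrightarrow \mathop{\lim_{\longrightarrow}}_{[k':k]<\infty} H^{4}_{\text{\'et}}(X_{k'}\times X_{k'},\zit_{\ell}(2))_{0}
\]
is injective. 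This is done by comparing the Abel--Jacobi map with Bloch's torsion cycle class map $cl_{B}$ through a commutative diagram; $cl_{B}$ is injective by Merkurjev--Suslin, and the Bockstein boundary $\partial$ is an isomorphism by a weights argument (\cite{S3}). Once one has this injectivity, Theorem~\ref{main}\ref{main-b} kills the image of $\Gamma^{N}$ and hence kills $\Gamma^{N}$ itself. Your proposal replaces this entire mechanism with a Chow--K\"unneth argument, but does not produce an injectivity statement, so you genuinely do need to close the loop you yourself flag.

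Three concrete gaps in your alternative route. First, you omit the Rost nilpotence reduction entirely: a torsion cycle over $k$ need not come from a torsion cycle over $\overline{k}$, and without Rost nilpotence you cannot pass to $\overline{k}$, where the torsion structure of Chow groups is under much better control. Second, the integral Chow--K\"unneth decomposition you invoke, with a transcendental summand $t_{2}(X)$ and torsion-free endomorphism rings for the other pieces, is not available over $\zit$ in general; these decompositions are usually established with $\qit$-coefficients, which defeats the purpose here since you are precisely chasing torsion phenomena. Third, Roitman's theorem controls the torsion of $CH_{0}$ of $X$ over an algebraically closed field, but $\End(t_{2}(X))$ (in the Kahn--Murre--Pedrini picture) is governed by the Albanese kernel of $X$ over the \emph{function field} $k(X)$, so Roitman does not give you the handle you want on $\End(t_{2}(X))$. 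Even setting aside all of this, you yourself acknowledge the composition-loop obstruction (off-diagonal components cycling back to a nonzero diagonal contribution) and do not resolve it. The paper's route through Merkurjev--Suslin and the Abel--Jacobi/Bockstein comparison is what actually carries the day, and some version of that injectivity seems unavoidable.
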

\noindent The proof of this theorem will use Theorem \ref{main}, as well as a consequence of the Merkurjev-Suslin Theorem. When the codimension is $> 2$, the conjecture seems elusive even in instances for which Conjecture \ref{nil} is known. For instance, the $\ell^{r}$-torsion can be infinite, even when $X$ is an Abelian variety. In fact, one can take $X$ to be a triple product of very general elliptic curves over $\cit$; the main result of \cite{Di1} (or \cite{S2}) together with the main result of \cite{S1} show that the $\ell^{r}$-torsion is infinite in this case. It should be noted that Conjecture \ref{tor-nil} is a more general version of the Rost nilpotence conjecture, which predicts that cycles in the kernel of the extension of scalars functor:
\[ \End_{\M_{k}} (\h(X)) \to \End_{\M_{E}} (\h(X_{E})) \] 
(for $E/k$ a field extension) is nilpotent. This last conjecture is known for certain types of homogeneous varieties (see \cite{CGM}) and was recently established for surfaces (first by Gille in \cite{Gi} for fields of characteristic $0$ and then by Rosenschon and Sawant in \cite{RS} for perfect fields of characteristic $p$) but remains wide open in general.\\

\subsection*{Acknowledgements}
The author would like to thank Bruno Kahn for his interest and for taking the time to read several drafts of this paper; his comments were indispensable. He would also like to thank Robert Laterveer for some conversations which led to the formulation of the problems considered in the paper.
\section{Preliminaries} 
\subsection*{Some decomposition theorems} Let $X$ be a smooth projective variety over a field $k$ and let $\ell \neq \text{char } k$ and let $G_{k}$ denote the absolute Galois group. Then, there is a Hochschild-Serre spectral sequence for Jannsen's continuous \'etale cohomology (see \cite{J2}):
\[ H^{p}_{cont} (k, H^{q} (X_{\overline{k}}, \qit_{\ell})) \Rightarrow H^{p+q}_{\text{\'et}} (X_{\overline{k}}, \qit_{\ell}) \]
This spectral sequence degenerates by some non-trivial results of Deligne. More precisely, let $\pi: X \to \text{Spec } k$ be the structure morphism. Then, by Prop 1.2 of \cite{D} the degeneration of the above sequence is equivalent to a (non-canonical) isomorphism: 
\begin{equation} R_{cont}\pi_{*}\qit_{\ell} \cong \bigoplus_{i \in \zit} R_{cont}^{i}\pi_{*}\qit_{\ell}[-i] \label{deg-Q}\end{equation}
in $D^{b} (\text{Spec } k, \qit_{\ell})$, the bounded derived category of continuous $\qit_{\ell}[G_{k}]$-modules. (It ought to be noted that this is not a derived category in the usual sense, but the argument in \cite{D} still applies). Then, (\ref{deg-Q}) is obtained from the following result (together with Deligne's proof of the Hard Lefschetz theorem for $\ell$-adic cohomology):
\begin{Thm}[\cite{D} Thm. 1.5]\label{Deligne} Suppose that $\A$ is an Abelian category with bounded derived category $D^{b}(\A)$. For $A_{*} \in D^{b}(\A)$, there is an isomorphism:
\[ A_{*} \cong \bigoplus_{i \in \zit} H^{i} (A_{*})[-i] \in  D^{b}(\A) \]
if there is an integer $n$ and an endomorphism of degree $2$, $\phi: A_{*} \to A_{*}[2]$, which induces an isomorphism on cohomology:
\[ H^{n-i}(\phi^{i}): H^{n-i}(A_{*}) \to  H^{n+i}(A_{*}) \]
\end{Thm}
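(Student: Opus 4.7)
The approach combines $\mathfrak{sl}_2$-representation theory on $H^{*}(A_*)$ with an induction on the cohomological amplitude of $A_*$, reducing the question to splitting off a single ``extremal $\mathfrak{sl}_2$-string'' at a time.

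First, I would exploit the hypothesis to put an $\mathfrak{sl}_2$-action on $\bigoplus_{j} H^{j}(A_*)$ centered in degree $n$, with $\phi$ playing the role of the raising operator. Standard representation theory then yields a primitive decomposition
\[
H^{n-i}(A_*) \;=\; \bigoplus_{j\geq 0}\phi^{j}\bigl(P^{\,n-i-2j}\bigr), \qquad P^{m} \;:=\; \ker\!\bigl(\phi^{n-m+1}\colon H^{m}(A_*)\to H^{2n-m+2}(A_*)\bigr).
\]
In particular, $A_*$ is cohomologically concentrated in a symmetric range $[n-k,n+k]$, and $P^{n-k}=H^{n-k}(A_*)$.

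I would then induct on $k$. For $k=0$ the statement is immediate. For $k\geq 1$, the aim is to split off the extremal $\mathfrak{sl}_2$-string $\bigoplus_{j=0}^{k}\phi^{j}P^{n-k}$---which lives in cohomological degrees $n-k,\dots,n+k$---as a direct summand of $A_*$ in $D^{b}(\A)$, so that the cofiber $B_*$ has strictly smaller amplitude and still satisfies the hard Lefschetz hypothesis, whereupon induction applies. Since $H^{n-k}(A_*)$ sits in the bottom cohomological degree of $A_*$, there is a canonical lift $\iota\colon H^{n-k}(A_*)[k-n]\to A_*$ of the identity on $H^{n-k}$, the lifting obstruction vanishing because $\tau_{<n-k}A_*=0$. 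Iterating, the maps $\iota_j := \phi^{j}\circ\iota$ assemble into a morphism
\[
\sigma\colon\bigoplus_{j=0}^{k} H^{n-k}(A_*)[\,k-n-2j\,]\;\longrightarrow\; A_*,
\]
which by hard Lefschetz realises the inclusion of the extremal string on each cohomology group.

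The main obstacle I foresee is verifying that $\sigma$ exhibits its source as a genuine direct summand of $A_*$ in $D^{b}(\A)$, not merely at the level of cohomology; the corresponding obstruction is a class in $\mathrm{Ext}^{1}$, and the essential input for its vanishing is the symmetry supplied by $\phi$. The plan is to dually construct a retraction $\pi\colon A_*\to H^{n+k}(A_*)[-n-k]$ from the top cohomology---here using hard Lefschetz to identify $H^{n+k}(A_*)\cong \phi^{k}P^{n-k}$---and to pair $\iota$ with $\pi$ through the $\phi$-iterates, producing an explicit retraction of $\sigma$. The $\phi$-equivariance of the resulting splitting ($\phi\circ\iota_j = \iota_{j+1}$) is what ensures the complementary summand $B_*$ inherits an endomorphism of degree $2$ still satisfying hard Lefschetz with amplitude $\leq k-1$, closing the induction.
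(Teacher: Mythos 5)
The statement you are proving is Deligne's Th\'eor\`eme 1.5 from \cite{D}, which the paper simply cites and does not reprove; so I am measuring your attempt against Deligne's original argument. Your plan is sound and, I believe, would yield a complete proof. The load-bearing facts are exactly the ones you identify: (i) since $\tau_{<n-k}A_*=0$, the truncation map gives a \emph{canonical} $\iota:H^{n-k}(A_*)[k-n]\to A_*$; (ii) dually, $\tau_{>n+k}A_*=0$ gives a canonical $\pi:A_*\to H^{n+k}(A_*)[-n-k]$; (iii) composing $\pi$, powers of $\phi$, and $\iota$, the resulting ``Gram matrix'' $(r_j\circ\iota_{j'})_{0\le j,j'\le k}$ of endomorphisms of $\bigoplus_j H^{n-k}(A_*)[k-n-2j]$ is upper--triangular unipotent --- the $(j,j')$ entry lies in $\operatorname{Ext}^{2(j'-j)}$, hence vanishes for $j>j'$ and equals the Hard Lefschetz isomorphism for $j=j'$ --- and is therefore invertible, which is the precise mechanism by which $\sigma$ acquires a retraction. (Your phrasing ``the obstruction is a class in $\operatorname{Ext}^1$'' is a little misleading: one does not need a single Ext class to vanish; one inverts a unipotent matrix whose strictly-upper entries live in $\operatorname{Ext}^{2},\operatorname{Ext}^{4},\dots$.) Two small points that should be made explicit to close the induction: $\phi\circ\iota_k=\phi^{k+1}\iota$ vanishes identically, not merely on cohomology, by a degree count (its source is concentrated in degree $n+k$ while $A_*[2]$ lives in degrees $\le n+k-2$), so $\phi$ genuinely preserves the image of $\sigma$ and descends to the cone; and one then checks by the Lefschetz decomposition that the induced $\bar\phi$ on the cone satisfies Hard Lefschetz centered at $n$ with amplitude $\le k-1$. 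Compared with Deligne's own proof, the ingredients are the same --- canonical truncation maps at the two extremes, propagation by $\phi$, the primitive/Lefschetz decomposition, and a triangularity argument --- but Deligne assembles the isomorphism $A_*\cong\bigoplus_i H^i(A_*)[-i]$ all at once by writing down for every $i$ a map $H^i(A_*)[-i]\to A_*$ built from truncations and $\phi$-powers and checking the total map is a quasi-isomorphism, rather than peeling off one extremal $\mathfrak{sl}_2$-string at a time. Your inductive organization is cleaner to state and verify (one only ever needs the bottom and top truncations), at the modest cost of having to rerun the Hard Lefschetz verification on the complementary summand at each step; Deligne's version avoids that bookkeeping but front-loads the primitive decomposition into a single larger matrix.
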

\noindent We can use this to prove an analogue of (\ref{deg-Q}) with integral coefficients which we will need for the proof of Theorem \ref{main-2}.
\begin{Prop}\label{lef-mod-tors} Suppse that $k$ has characteristic $0$ or that $X$ satisfies the Lefschetz standard conjecture. Then for all but finitely many primes $\ell$ there is a decomposition:
\[ R\pi_{*}\zit_{\ell} \cong \bigoplus_{i=0}^{2d} R^{i}\pi_{*}\zit_{\ell}[-i] \]
in $D^{b} (\text{Spec } k, \zit_{\ell})$, the bounded derived category of continuous $\zit_{\ell}[G_{k}]$-modules (in the sense of \cite{E}).
\begin{proof} By Theorem \ref{Deligne}, it suffices to prove that if $h \in Pic(X)$ is the class of an ample hyperplane divisor and $\mathcal{L}$ is the corresponding Lefschetz operator, then there is an isomorphism:
\[ H^{d-m}_{\text{\'et}} (X_{\overline{k}}, \zit_{\ell}) \xrightarrow[\cong]{\mathcal{L}^{m}} H^{d+m}_{\text{\'et}} (X_{\overline{k}}, \zit_{\ell}(m))\]
for all but finitely many primes $\ell$. Now, by \cite{Ga} $H^{*}_{\text{\'et}} (X_{\overline{k}}, \zit_{\ell})$ is torsion-free for all but finitely many $\ell$. So, it suffices to prove that the map:
\begin{equation} H^{d-j}_{\text{\'et}} (X_{\overline{k}}, \zit_{\ell})/tors \xrightarrow{\mathcal{L}^{m}} H^{d+j}_{\text{\'et}} (X_{\overline{k}}, \zit_{\ell})/tors \label{mod-tors}\end{equation}
is an isomorphism for all but finitely many $\ell$. In this direction, we note that since $X$ is defined over a finitely generated field, we can assume (by invariance of \'etale cohomology under algebraically closed extensions, \cite{M} VI Cor. 2.6) that $k$ is some finitely generately field. Using his proof of the Weil conjectures, Deligne proved in \cite{D2} the Hard Lefschetz theorem with $\qit_{\ell}$ coefficients for finite fields. Using the proper base change theorem (VI Cor. 2.3 of loc. cit.), one obtains the statement for all finitely generated fields of characteristic $p$. Thus, the Lefschetz operator $\mathcal{L}^{m}$ induces an isomorphism:
\[  H^{d-m}_{\text{\'et}} (X_{\overline{k}}, \qit_{\ell}) \xrightarrow[\cong]{\mathcal{L}^{m}} H^{d+m}_{\text{\'et}} (X_{\overline{k}}, \qit_{\ell}(m))\]
So, the map in (\ref{mod-tors}) is injective for all $\ell$ and so we only need to prove that it is surjective for all but finitely many $\ell$. In this direction, we note that if the Lefschetz standard conjecture holds, there exists:
\[ \Lambda \in CH^{d-1} (X \times X)_{\qit} \]
for which $\mathcal{L}^{m}\circ \Lambda^{m} = \text{ Id}$ on $H^{d+m}_{\text{\'et}} (X_{\overline{k}}, \qit_{\ell}(M))$. Let $M \in \zit$ be such that $M\cdot \Lambda \in CH^{d-1} (X \times X)$ so that setting $\Lambda' := M\cdot \Lambda$, we have
\[ \mathcal{L}^{m}\circ \Lambda^{m} = M\cdot\text{ Id} \]
on $H^{d+m}_{\text{\'et}} (X_{\overline{k}}, \zit_{\ell}(m))/tors$. Since $M$ is invertible for all but finitely many primes, it follows that the cokernel of (\ref{mod-tors}) is finite, as desired. On the other hand, if $k$ has characteristic $0$, one quickly reduces to the case of $k = \cit$, where one obtains 
\[ \Lambda \in H^{2(d-1)} (X \times X, \qit) \] 
in singular cohomology, and the remainder of the proof is then identical.
\end{proof}
\end{Prop}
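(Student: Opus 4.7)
The plan is to invoke Deligne's decomposition criterion (Theorem \ref{Deligne}) applied to $A_* = R\pi_*\zit_\ell$ in the derived category of continuous $\zit_\ell[G_k]$-modules, taking $\phi$ to be cup product with the ample class $h \in \mathrm{Pic}(X)$. Since the Galois action on cohomology is equivariant with respect to cup product with an algebraic class, this gives a morphism in the right derived category, and the decomposition will follow once we verify the integral Hard Lefschetz isomorphism
\[ \mathcal{L}^m : H^{d-m}_{\text{\'et}}(X_{\overline{k}}, \zit_\ell) \longrightarrow H^{d+m}_{\text{\'et}}(X_{\overline{k}}, \zit_\ell(m)) \]
for all but finitely many primes $\ell$.

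By Gabber's torsion-freeness theorem, after discarding a finite set of primes one may assume $H^{*}_{\text{\'et}}(X_{\overline{k}}, \zit_\ell)$ is torsion-free, so it suffices to establish that $\mathcal{L}^m$ is an isomorphism modulo torsion. Injectivity is immediate from the rational Hard Lefschetz theorem: in characteristic $0$ via the classical result over $\cit$ together with the Artin comparison isomorphism, and in positive characteristic via Deligne's theorem, which one reduces to finite fields by first descending to a finitely generated base (using invariance of \'etale cohomology under extension of algebraically closed fields) and then applying proper base change. For surjectivity, one must produce an approximate integral inverse: the Lefschetz standard conjecture supplies a cycle $\Lambda \in CH^{d-1}(X \times X)_{\qit}$ inverting $\mathcal{L}^m$ rationally, while in characteristic $0$ one instead descends to $k = \cit$ and takes $\Lambda$ as a rational singular cohomology class (algebraicity being unnecessary, since only the induced endomorphism of $\ell$-adic cohomology is used).

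Clearing denominators by multiplying $\Lambda$ by some integer $M$ yields an integral endomorphism $\Lambda'$ with $\mathcal{L}^m \circ (\Lambda')^m = M \cdot \mathrm{Id}$ on $\zit_\ell$-cohomology modulo torsion. Hence the cokernel of $\mathcal{L}^m$ is annihilated by $M$, and vanishes for every prime $\ell$ not dividing $M$. The main technical obstacle is precisely this passage from rational to integral coefficients: the finite exceptional set of primes arises from combining those dividing $M$ with the primes excluded by Gabber's theorem. Once both issues are controlled, assembling the pieces through Theorem \ref{Deligne} produces the desired splitting of $R\pi_*\zit_\ell$.
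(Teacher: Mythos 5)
Your proposal follows essentially the same route as the paper's proof: reduce to the integral Hard Lefschetz isomorphism via Theorem \ref{Deligne}, invoke Gabber to discard torsion, obtain injectivity from rational Hard Lefschetz (Deligne in positive characteristic, the classical result plus comparison in characteristic $0$), and obtain surjectivity by clearing denominators on the rational inverse operator $\Lambda$ furnished either by the Lefschetz standard conjecture or by singular cohomology over $\cit$. The only point worth noting is a shared minor slip: $\mathcal{L}^m \circ (\Lambda')^m = M^m \cdot \mathrm{Id}$ rather than $M \cdot \mathrm{Id}$ after rescaling, but this is harmless since $M^m$ is still invertible for all $\ell \nmid M$.
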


\subsection*{Chow motives} For the applications below, we let $\M_{k, \qit}$ be the category of Chow motives over $k$ with rational coefficients. Recall that this is the pseudo-Abelian envelope of the additive category $Cor(k)$ whose objects are pairs $(X, m)$ where $X$ is smooth and projective of dimension $d$ and $m \in \zit$ and whose morphisms are given by correspondences:
\[ Cor ((X, m), (Y, m')) := CH^{d +m'-m} (X \times Y)_{\qit} \]
Thus, a Chow motive is given by a triple $(X, \pi, m)$, where $\pi \in Cor ((X, 0), (X, 0))$ is an idempotent. We also observe that $\M_{k, \qit}$ possesses the structure of a tensor category:
\[(X, \pi, m) \otimes (Y, \pi', m') := (X \times Y, \pi \times \pi', m+ m')\]
One can also define an internal symmetric and alternating product functor (using the usual Schur functor formalism). Moreover, there is the $\ell$-adic realization functor $R: \M_{k, \qit} \to D^{b}(\text{Spec } k, \qit_{\ell})$ which preserves the tensor product structure since for $\pi: X \to \text{Spec } k$, $\pi': Y \to \text{Spec } k$ smooth projective morphisms, we have
\[ R(\pi, \pi')_{*}\qit_{\ell, X \times Y} \cong R\pi_{*}\qit_{\ell, X} \boxtimes R\pi'_{*}\qit_{\ell, Y}  \]
by the K\"unneth theorem (\cite{M} VI.1). Finally, we will use the Tate twist notation
\[ M(i) = (X, \pi, m+i) \]
for $M= (X, \pi, m)$.

\section{Proof of Theorem \ref{main}}

\begin{proof}[Proof of Theorem \ref{main}]
The following proposition is key:
\begin{Prop} For a smooth projective morphism $\pi: X \to \text{Spec } k$, the kernel of the natural map of algebras:
\begin{equation} \End_{D} (R\pi_{*}A) \xrightarrow{\bigoplus_{i \in \zit} H^{i}} \bigoplus_{i \in \zit}\End_{A} (R^{i}\pi_{*}A)\label{main-map} \end{equation}
is a nil ideal, where $D$ is either the bounded derived category of continuous $\qit_{\ell}[G_{k}]$-modules (in the sense of \cite{J2}) with $A = \qit_{\ell}$ or the bounded derived category of continuous $ \zit_{\ell}[G_{k}]$-modules with $A = \zit_{\ell}$, provided that either of the assumptions of Prop. \ref{lef-mod-tors} are satisfied.
\begin{proof} We begin by noting that there is an obvious isomorphism:
\[\End_{A} (R^{i}\pi_{*}A) \cong \End_{D} (R^{i}\pi_{*}A[i]) \]
whose inverse is obtained by the functor $H^{i}$. Then, we set $I$ to be the kernel of (\ref{main-map}), and the proposition then follows from the lemma below (together with Theorem \ref{Deligne}), which was communicated to the author by Bruno Kahn and whose proof is essentially that of Prop. 2.3.4(c) from \cite{AKOS}.
\end{proof}
\end{Prop}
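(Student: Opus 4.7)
The plan is to use the (non-canonical) splitting $R\pi_*A \cong \bigoplus_{i=0}^{2d} R^i\pi_*A[-i]$ --- available in $D$ by Theorem \ref{Deligne} in the $\qit_{\ell}$-case and by Proposition \ref{lef-mod-tors} in the $\zit_{\ell}$-case under the stated hypotheses --- to realise $\End_{D}(R\pi_*A)$ as a triangular matrix algebra indexed by cohomological degree, and then to check directly that the kernel $I$ of $\bigoplus_i H^i$ is the strictly upper-triangular part, which is automatically nilpotent because the degrees range over the finite set $\{0,1,\ldots,2d\}$.

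First, fix such a splitting. It yields
\[
\End_{D}(R\pi_*A) \;\cong\; \bigoplus_{i,j}\Hom_{D}\bigl(R^j\pi_*A[-j],\,R^i\pi_*A[-i]\bigr) \;\cong\; \bigoplus_{i,j}\mathrm{Ext}^{\,j-i}_{D}\bigl(R^j\pi_*A,\,R^i\pi_*A\bigr).
\]
The key point is that each $R^i\pi_*A$ is an object concentrated in degree $0$, so Ext groups in negative degree vanish; consequently the $(i,j)$-component of any $f \in \End_{D}(R\pi_*A)$ is forced to be zero whenever $i>j$, and the matrix is upper-triangular with respect to this splitting. The functor $\bigoplus_i H^i$ picks out exactly the diagonal entries $\End_{A}(R^i\pi_*A)$, so its kernel $I$ is precisely the strictly upper-triangular part, namely those $f$ with $f_{ii}=0$ for all $i$.

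Next, argue nilpotence by iterated composition. For $f \in I$, a direct matrix calculation shows that $(f^N)_{ij}$ is a sum over strictly increasing chains $i < k_1 < k_2 < \cdots < k_{N-1} < j$ of compositions $f_{i,k_1}\circ f_{k_1,k_2}\circ\cdots\circ f_{k_{N-1},j}$, and so vanishes unless $j - i \geq N$. Since $i,j \in \{0,1,\ldots,2d\}$, we conclude $f^{2d+1} = 0$; hence $I$ is a nil ideal, of nilpotence index at most $2d+1$.

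The real content lives entirely in the two splitting inputs: Deligne's Theorem \ref{Deligne} for $\qit_{\ell}$-coefficients, and the Hard-Lefschetz-based Proposition \ref{lef-mod-tors} for $\zit_{\ell}$-coefficients. Once a splitting is in hand, the remaining steps are formal manipulations of how composition in a bounded derived category respects the cohomological degree, which I expect is the content of the Kahn/AKOS lemma referenced in the author's proof. The main obstacle is therefore already absorbed into those earlier results: without Hard Lefschetz (and torsion-freeness in the integral case) the splitting would fail, and the triangular-matrix structure underlying the whole argument would evaporate.
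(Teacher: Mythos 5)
Your proof is correct and takes a genuinely different route from the paper's. The paper invokes its Lemma 1.3 (the ``Kahn lemma,'' adapted from \cite{AKOS} Prop.~2.3.4(c)), a purely additive-categorical statement: given a finite direct sum $C=\bigoplus C_i$ and an ideal $I\subset\End(C)$ whose diagonal projections $I_i:=\nu_i\,I\,\iota_i$ are nil of index $r_i$, the ideal $I$ is nil of index at most $\sum r_i + n - 1$. The paper applies this with $I_i=0$ for all $i$ (because $\ker(\bigoplus H^i)$ kills exactly the diagonal), which already gives nilpotence without ever using triangularity. You instead observe that the $(i,j)$-entry lives in $\Hom_D(R^j\pi_*A[-j],R^i\pi_*A[-i])\cong\mathrm{Ext}^{j-i}_D(R^j\pi_*A,R^i\pi_*A)$, which vanishes for $i>j$, so the endomorphism algebra is upper triangular and $I$ is the strictly upper-triangular part, giving the sharper bound $I^{2d+1}=0$ (versus $4d+1$ from the lemma). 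This is cleaner and quantitatively better, but it relies on negative Ext vanishing between heart objects, i.e.\ on $D$ carrying a genuine t-structure with the standard vanishing --- worth flagging explicitly since the paper emphasizes that these continuous/Ekedahl-style categories are not derived categories in the usual sense, and the additive-category lemma deliberately sidesteps that subtlety. Also, your closing guess that the triangular-matrix manipulation is ``the content of the Kahn/AKOS lemma'' is not quite right: that lemma is strictly more general and makes no use of vanishing off-diagonals.
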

\begin{Lem}\label{key} Let $\C$ be an additive category and $C_{1}, C_{2}, \cdots C_{n} \in \C$ and set 
\[ C := \bigoplus_{i=1}^{n} C_{i} \]
Denote by $\iota_{i}: C_{i} \hookrightarrow C$ the inclusions and $\nu_{i}: C \to C_{i}$ the projections. Let $I$ be an ideal of $\End_{\C} (C)$ for which $I_{i} =\iota_{i}\circ I \circ \nu_{i}$ is a nil ideal with nilpotency index $r_{i}$. Then, $I$ is a nil ideal; in fact, $I^{N} =0$ for $N \geq r_{1} +r_{2} + \ldots +r_{n} + n-1$.
\begin{proof}[Proof of Lemma] By induction, we quickly reduce to the case of $n=2$. To this end, let $f \in I$ and write
\[ f = f_{11} + f_{12} + f_{21} + f_{22} \]
where $f_{ij} = \iota_{j}\circ \circ f \circ \nu_{i}$. Then, we compute
\begin{equation} f^{N} = (f_{11} + f_{12} + f_{21} + f_{22})^{N} = \sum_{I} a_{I}f_{i_{1}j_{1}}^{p_{1}}f_{i_{2}j_{2}}^{p_{2}}\ldots f_{i_{k}j_{k}}^{p_{k}}\label{summands}\end{equation}
where $I$ is the indexing set of all such products $f_{i_{1}j_{1}}^{p_{1}}f_{i_{2}j_{2}}^{p_{2}}\ldots f_{i_{k}j_{k}}^{p_{k}}$. Now, we observe that $\nu_{i}\iota_{j} = 0$ if $i \neq j$, so it follows that all terms in the above sum are $0$ except for those where
\[ p_{l} = 1 \text{ unless } i_{l}= j_{l}, \text{ and } i_{1} = j_{2}, \ldots i_{k-1} = j_{k} \]
Then, any summand in the sum (\ref{summands}) is expressible as:
\begin{equation} f_{22}^{n_{1}}f_{12}f_{11}^{m_{1}}f_{21}f_{22}^{n_{2}}\ldots f_{22}^{n_{k}}f_{12}f_{11}^{m_{k}}f_{21}f_{22}^{n_{k+1}} \label{type}
\end{equation}
or the corresponding product with the subscripts relabeled (i.e., $1$ and $2$ swapped). As in \cite{AKOS}, we see that this is contained in $I_{2}^{b} \cap I_{1}^{a}$ for $b \geq n_{1} + n_{2} + \ldots + n_{k+1} + k$ and $a \geq m_{1} + m_{2} + \ldots + m_{k} + k-1$. Since \[N = m_{1} + m_{2} + \ldots + m_{k} +n_{k} + m_{k+1} + 2k = a + b+1 \] 
We see then that taking $N = r_{1}+r_{2}+1$, then either $a \geq r_{1}$ or $b \geq r_{2}$, making $I_{2}^{b} \cap I_{1}^{a} = 0$. The lemma now follows.
\end{proof}
\end{Lem}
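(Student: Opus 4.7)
My plan is to follow the outline implicit in the statement: first reduce to the two-summand case, and then analyze the block expansion of $f^N$ for $f \in I$.

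For the reduction, I would argue by induction on $n$. Given the decomposition $C = C_1 \oplus \ldots \oplus C_n$, group the first $n-1$ summands as $C' := C_1 \oplus \ldots \oplus C_{n-1}$ and write $C = C' \oplus C_n$. The induction hypothesis, applied to the ideal of $\End_\C(C')$ obtained by restricting $I$, shows that the corresponding $I'$ is nil of index $\leq r_1 + \ldots + r_{n-1} + (n-1) - 1$. Together with $I_n$ of index $r_n$, the $n=2$ case (with the stated quantitative bound) then gives the desired bound for $n$, because $r_1 + \ldots + r_{n-1} + (n-2) + r_n + 1 = r_1 + \ldots + r_n + n - 1$.

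For the base case $n=2$, I would fix $f \in I$ and use the $2 \times 2$ block decomposition $f = f_{11} + f_{12} + f_{21} + f_{22}$ with $f_{ij} = \iota_j \circ f \circ \nu_i$, expanding $f^N$ as a sum of products of the four $f_{ij}$. The crucial simplification is $\nu_j \circ \iota_i = 0$ for $i \neq j$ and $= \mathrm{id}$ otherwise, so only those monomials survive whose index sequence is ``composable'' in the sense recorded in the excerpt: a diagonal factor $f_{ii}$ may be repeated, and transitions between diagonals must pass through the off-diagonal block with matching indices. Every surviving monomial therefore has the alternating shape displayed in \eqref{type}, or its $1 \leftrightarrow 2$ swap.

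The heart of the proof, and the step I expect to take the most care, is the bookkeeping that shows each such monomial lies in $I_1^a \cap I_2^b$ for $a + b + 1 = N$ and explicit $a, b$ in terms of the exponents $m_i, n_j$ appearing in \eqref{type}. For this I would insert factors $\nu_i \iota_i = \mathrm{id}$ at the appropriate spots to rewrite the monomial as an element of $\iota_2 \circ (\mbox{something}) \circ \nu_2$, with that ``something'' visibly a product of $b$ elements of $I_2 = \nu_2 \circ I \circ \iota_2$ (using that $I$ is a two-sided ideal so pre- and post-composing by arbitrary morphisms keeps us in $I$); symmetrically the monomial factors through $\iota_1 \circ (\mbox{product of } a \mbox{ elements of } I_1) \circ \nu_1$. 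Once this is set up, the pigeonhole step from the excerpt finishes: if $N = r_1 + r_2 + 1$ then $a + b + 1 = N$ forces $a \geq r_1$ or $b \geq r_2$, so every surviving monomial vanishes, and hence $f^N = 0$. This yields the claimed bound $N \geq r_1 + r_2 + 1$ in the $n=2$ case, completing the induction.
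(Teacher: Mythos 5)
Your proposal follows essentially the same route as the paper: reduce to $n=2$ by grouping the first $n-1$ summands and inducting, expand $f^N$ into block monomials, observe that only composable (alternating) monomials survive, show each such monomial factors through $I_1^a$ and $I_2^b$ with $a+b+1=N$, and finish by pigeonhole. The only (welcome) difference is a bit more care in the bookkeeping step, where you say explicitly that a monomial \emph{factors through} a power of $I_i$ rather than ``lies in'' it, and flag where two-sidedness of $I$ is used when inserting the identities $\nu_i\iota_i$.
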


\noindent First we prove item \ref{main-a}. A basic observation is that the cycle class map coincides with the realization functor:
\[\End_{\M_{k, \qit}} (\h(X)) \to \End_{D^{b} (\text{Spec } k, \qit_{\ell})} (R\pi_{*}\qit_{\ell, X}) \]
after suitable identifation. In fact, we have the following chain of natural isomorphisms (see for instance Remark 4.2 of \cite{J3}):
\begin{equation} \begin{CD}
CH^{d} (X \times X)_{\qit} @>{}>> H^{2d}_{\text{\'et}} (X \times X, \qit_{\ell}(d))\\
@| @V{\cong}VV\\
\Hom_{\M_{k, \qit}} (\mathds{1}, \h(X\times X)[d]) @>>> \Hom_{D^{b} (\text{Spec } k, \qit_{\ell})} (\mathds{1}, R(\pi,\pi)_{*}\qit_{\ell, X\times X}(d)[2d])\\
@V{\cong}VV @V{\cong}VV\\
\Hom_{\M_{k, \qit}} (\mathds{1}, \h(X) \otimes \h(X)[d]) @>>> \Hom_{D^{b} (\text{Spec } k, \qit_{\ell})} (\mathds{1}, R\pi_{*}\qit_{\ell, X} \boxtimes R\pi_{*}\qit_{\ell, X} (d)[2d])\\
@V{\cong}VV @V{\cong}VV\\
\Hom_{\M_{k, \qit}} (\mathds{1}, \h(X) \otimes \h(X)^{\vee}) @>>> \Hom_{D^{b} (\text{Spec } k, \qit_{\ell})} (\mathds{1}, R\pi_{*}\qit_{\ell, X} \boxtimes R\pi_{*}\qit_{\ell, X}^{\vee})\\
@V{\cong}VV @V{\cong}VV\\
\End_{\M_{k, \qit}} (\h(X)) @>{}>> \End_{D^{b} (\text{Spec } k, \qit_{\ell})} (R\pi_{*}\qit_{\ell, X})\\
\end{CD}\label{big-commutative}
\end{equation}
where the first horizontal arrow is the cycle class map and the remaining horizontal arrows are the $\ell$-adic realization functors. On the left, the vertical isomorphisms follow from formal properties of Chow motives, while on the right, the vertical isomorphisms follow from the K\"unneth isomorphism, Poincar\'e duality and a property of the dual. Moreover, the bottom-most horizontal arrow respects composition (see, for instance, \cite{H} Prop. 19.2.4). There is also the ``extension-of-scalars" commutative diagram for cohomology:
\begin{equation}\begin{tikzcd} 
H^{2d}_{\text{\'et}} (X \times X, \qit_{\ell}(d)) \arrow{r}{\cong} \arrow{d} 
& \End_{D^{b} (\text{Spec } k, \qit_{\ell})} (R\pi_{*}\qit_{\ell, X}) \arrow{d}\\ 
H^{2d}_{\text{\'et}} (X_{\overline{k}} \times X_{\overline{k}}, \qit_{\ell}(d)) \arrow{r}{\cong} 
 & \bigoplus_{i=0}^{2d} \End_{\qit_{\ell}} (H^{i}_{\text{\'et}} (X_{\overline{k}}, \qit_{\ell})) \end{tikzcd}\label{commutative} \end{equation}
where the top horizontal arrow is the composition of right vertical arrows in (\ref{big-commutative}), the right vertical arrow is a special case of (\ref{main-map}) and the bottom arrow in (\ref{commutative}) is given by the composition of the following isomorphisms
\[ \begin{split}
H^{2d}_{\text{\'et}} (X_{\overline{k}} \times X_{\overline{k}}, \qit_{\ell}(d)) & \cong \bigoplus_{i=0}^{2d} H^{i}_{\text{\'et}}(X_{\overline{k}}, \qit_{\ell}(i))\otimes H^{2d-i}_{\text{\'et}}(X_{\overline{k}}, \qit_{\ell}(d-i))\\
& \cong \bigoplus_{i=0}^{2d} H^{i}_{\text{\'et}} (X_{\overline{k}}, \qit_{\ell}(i))\otimes H^{i}_{\text{\'et}} (X_{\overline{k}}, \qit_{\ell}(i))^{\vee}\\
& \cong \bigoplus_{i=0}^{2d} \End_{\qit_{\ell}} (H^{i}_{\text{\'et}} (X_{\overline{k}}, \qit_{\ell}))
\end{split} \]
Suppose then that the cycle class of
\[ \Gamma \in \End_{\M_{k, \qit}} (\h(X)) \]
is nilpotent in $\bigoplus_{i=0}^{2d} \End_{\qit_{\ell}} (H^{i}_{\text{\'et}} (X_{\overline{k}}, \qit_{\ell}))$. From the last proposition, it then follows that \[[\Gamma^{N}] = [\Gamma]^{N} = 0 \in H^{2d}_{\text{\'et}} (X \times X, \qit_{\ell}(d)) \cong End_{D^{b} (\text{Spec } k, \qit_{\ell})} (Rf_{*}\qit_{\ell, X})\] for some $N>>0$ not depending on $\ell$. An inspection of the above proof shows that it works mutatis mutandis to prove item \ref{main-b}, so long as $H^{*}_{\text{\'et}} (X_{\overline{k}}, \zit_{\ell}(r))$ is torsion-free. 
\end{proof}

\section{Proof of Theorem \ref{main-2}}
\noindent Suppose first that 
\[ \Gamma \in \ker \{CH^{2} (X \times X) \to CH^{2} (X_{\overline{k}} \times X_{\overline{k}}) \} \]
Then, the Rost nilpotence conjecture holds (see \cite{Gi} or \cite{RS}), so it suffices to prove that 
\[ \Gamma \in CH^{2} (X_{\overline{k}} \times X_{\overline{k}})[\ell^{\infty}] \]
is nilpotent, which we can reduce to the case of $k$ finitely generated using Suslin rigidity in the form of \cite{Le}. For ease of notation, write $Y = X \times X$. Then, we have the cycle class map:
\begin{equation} CH^{2} (Y_{\overline{k}}) :=  \mathop{\lim_{\longrightarrow}}_{[k': k] <\infty} CH^{2} (Y_{k'}) \rightarrow \mathop{\lim_{\longrightarrow}}_{[k': k] <\infty} H^{4}_{\text{\'et}} (Y_{k'}, \zit_{\ell}(2))\label{cycle}\end{equation}
Now, we let
\[ H^{4}_{\text{\'et}} (Y_{k'}, \zit_{\ell}(2))_{0} := \ker \{ H^{4}_{\text{\'et}} (Y_{k'}, \zit_{\ell}(2)) \to H^{4}_{\text{\'et}} (Y_{\overline{k}}, \zit_{\ell}(2))^{G_{k'}} \}\]
For all but finitely many $\ell$, $H^{4}_{\text{\'et}} (Y_{\overline{k}}, \zit_{\ell}(2))$ is torsion-free by \cite{Ga}, so for all but finitely many such $\ell$ the image of an $\ell$-ary torsion cycle under (\ref{cycle}) lies in 
\[\mathop{\lim_{\longrightarrow}}_{[k': k] <\infty} H^{4}_{\text{\'et}} (Y_{k'}, \zit_{\ell}(2))_{0} \]
and thus we can consider the Abel-Jacobi map:
\[ CH^{2} (Y_{\overline{k}})[\ell^{\infty}] \to \mathop{\lim_{\longrightarrow}}_{[k': k] <\infty} H^{4}_{\text{\'et}} (Y_{k'}, \zit_{\ell}(2))_{0} \xrightarrow{\text{AJ}} \mathop{\lim_{\longrightarrow}}_{[k': k] <\infty} H^{1}(G_{k'}, H^{3}_{\text{\'et}} (Y_{\overline{k}}, \zit_{\ell}(2)))   \]
There is also the (direct limit of the) boundary map:
\[ \partial: H^{3}_{\text{\'et}} (Y_{\overline{k}}, \qit_{\ell}/\zit_{\ell}(2)) = \mathop{\lim_{\longrightarrow}}_{[k': k] <\infty} H^{3}_{\text{\'et}} (Y_{\overline{k}}, \qit_{\ell}/\zit_{\ell}(2))^{G_{k'}} \to \mathop{\lim_{\longrightarrow}}_{[k': k] <\infty}  H^{1}(G_{k'}, H^{3}_{\text{\'et}} (Y_{\overline{k}}, \zit_{\ell}(2))) \]
obtained by applying $G_{k'}$-invariants to the short exact sequence of $G_{k}$-modules (obtained from Bockstein using the fact that there is no torsion in $H^{*}_{\text{\'et}} (Y_{\overline{k}}, \zit_{\ell}(2))$) and then taking a direct limit over $r$:
\[ 0 \to H^{3}_{\text{\'et}} (Y_{\overline{k}}, \zit_{\ell}(2)) \xrightarrow{\times \ell^{r}} H^{3}_{\text{\'et}} (Y_{\overline{k}}, \zit_{\ell}(2)) \to H^{3}_{\text{\'et}} (Y_{\overline{k}} , \zit_{\ell}/\ell^{r}(2)) \to 0\]
Now, our key observation is that these maps fit into the following commutative diagram:
\begin{equation} \begin{CD}
CH^{2} (Y_{\overline{k}}) [\ell^{\infty}] @>{cl_{B}}>> H^{3}_{\text{\'et}} (Y_{\overline{k}}, \qit_{\ell}/\zit_{\ell}(2))\\
@VVV @V{\partial}VV\\
 \displaystyle 
\mathop{\lim_{\longrightarrow}}_{[k': k] <\infty} H^{4}_{\text{\'et}} (Y_{k'}, \zit_{\ell}(2))_{0} @>{\text{AJ}}>> \displaystyle\mathop{\lim_{\longrightarrow}}_{[k': k] <\infty} H^{1} (G_{k'}, H^{2}_{\text{\'et}} (Y_{\overline{k}}, \zit_{\ell}(2)))
\end{CD}\label{composition}
\end{equation}
where $cl_{B}$ is Bloch's cycle class map (see \cite{B}).
\begin{proof}[Proof that (\ref{composition}) commutes] Let $Y$ any smooth projective variety over $k$ and assume for simplicity that $H^{*}_{\text{\'et}} (Y_{\overline{k}}, \zit_{\ell})$ is torsion-free. Now, suppose that $\gamma \in CH^{2} (Y_{\overline{k}}) [\ell^{\infty}]$. Then, there exists some finite Galois extension $k'\subset k$ over which $\gamma$ is defined. In particular, 
\[ cl_{B} (\gamma) \in H^{3}_{\text{\'et}} (Y_{\overline{k}}, \qit_{\ell}/\zit_{\ell}(2))^{G_{k'}} \]
Then, there is a surjective map:
\begin{equation} H^{1}_{\text{Zar}} (Y_{k'}, \mathcal{K}_{2, \qit_{\ell}/\zit_{\ell}}) \to H^{2}_{\text{Zar}} (Y_{k'}, \mathcal{K}_{2}) [\ell^{\infty}] = CH^{2} (Y_{k'}) [\ell^{\infty}]\label{surj}\end{equation}
where $\mathcal{K}_{2}$ is the Zariski sheaf associated to the presheaf of the Quillen $K$-theory of $Y$ (similarly, $\mathcal{K}_{2, \qit_{\ell}/\zit_{\ell}}$ is the direct limit of the torsion sheaves $\mathcal{K}_{2, \zit/\ell^{r}}$). Then, the Galois symbol induces maps:
\begin{equation} \begin{split} H^{i}_{Zar} (Y_{k'}, \mathcal{K}_{2, \qit_{\ell}/\zit_{\ell}}) & \to H^{i}_{Zar} (Y_{k'}, \mathcal{H}^{2}_{\qit_{\ell}/\zit_{\ell}}(2))\\
H^{i}_{Zar} (Y_{k'}, \mathcal{K}_{2}) & \to H^{i}_{Zar} (Y_{k'}, \mathcal{H}^{2}_{\zit_{\ell}}(2))\\
\end{split}\label{double} \end{equation}
where $\mathcal{H}^{2}_{R}(2)$ denotes the Zariski sheafification of $U \mapsto H^{2}_{\text{\'et}} (U, R(2))$ for $R = \qit_{\ell}/\zit_{\ell}, \zit_{\ell}$. We observe that $i=2$ in (\ref{double}) gives the cycle class map. Moreover, (\ref{surj}) fits into a commutative diagram:
\begin{equation} \begin{CD}
H^{1}_{\text{Zar}} (Y_{k'}, \mathcal{K}_{2, \qit_{\ell}/\zit_{\ell}}) @>>> H^{2}_{\text{Zar}} (Y_{k'}, \mathcal{K}_{2}) [\ell^{\infty}]\\
@VVV @VVV\\
H^{1}_{\text{Zar}} (Y_{k'}, \mathcal{H}^{2}_{\qit_{\ell}/\zit_{\ell}}(2)) @>>> H^{2}_{\text{Zar}} (Y_{k'}, \mathcal{H}^{2}_{\zit_{\ell}}(2)) [\ell^{\infty}]\\
@VVV @VVV\\
H^{3}_{\text{\'et}} (Y_{k'}, \qit_{\ell}/\zit_{\ell}(2)) @>>> H^{4}_{\text{\'et}} (Y_{k'}, \zit_{\ell}(2))[\ell^{\infty}]
\end{CD}\label{big-CD}
\end{equation}
Here, the top vertical arrows are those of (\ref{double}), the bottom vertical arrows arise from the spectral sequence:
\[H^{p}_{Zar} (Y_{k'}, \mathcal{H}^{q}_{R}(2)) \Rightarrow H^{p+q}_{\text{\'et}} (Y_{k'}, R(2)) \]
and the bottom horizontal arrow is the boundary map of the Bockstein exact sequence:
\[ 0 \to H^{3}_{\text{\'et}} (Y, \zit_{\ell}(2))\otimes \qit_{\ell}/\zit_{\ell} \to H^{3}_{\text{\'et}} (Y, \qit_{\ell}/\zit_{\ell}(2)) \to H^{4}_{\text{\'et}} (Y, \zit_{\ell}(2))[\ell^{\infty}] \to 0 \]
Of course, there is a diagram analogous to (\ref{big-CD}) with $k'$ replaced by $\overline{k}$. In fact, using the Weil conjectures, Bloch derives $cl_{B}$ as the dotted arrow in:
\begin{equation}\begin{tikzcd}
H^{1}_{\text{Zar}} (Y_{\overline{k}}, \mathcal{K}_{2, \qit_{\ell}/\zit_{\ell}}) \arrow{r} \arrow{d} & CH^{2} (Y_{\overline{k}}) [\ell^{\infty}] \arrow{d} \arrow[dotted]{ld}\\
H^{3}_{\text{\'et}} (Y_{\overline{k}}, \qit_{\ell}/\zit_{\ell}(2)) \arrow{r} & H^{4}_{\text{\'et}} (Y_{\overline{k}}, \zit_{\ell}(2)) [\ell^{\infty}]
\end{tikzcd}\label{small-CD}
\end{equation}
Further, these (\ref{big-CD}) and (\ref{small-CD}) fit into a commutative cube in which the connecting arrows are the obvious extension-of-scalars maps from $k'$ to $\overline{k}$.
Thus, there is some $\tilde{\gamma} \in H^{1}_{\text{Zar}} (Y_{k'}, \mathcal{K}_{2, \qit_{\ell}/\zit_{\ell}})$ whose image under
\[H^{1}_{\text{Zar}} (Y_{k'}, \mathcal{K}_{2, \qit_{\ell}/\zit_{\ell}}) \to H^{3}_{\text{\'et}} (Y_{k'}, \qit_{\ell}/\zit_{\ell}(2)) \to H^{3}_{\text{\'et}} (Y_{\overline{k}}, \qit_{\ell}/\zit_{\ell}(2))^{G_{k'}} \]
coincides with $cl_{B}(\gamma)$.
Thus, what remains is to observe that there is a commutative diagram:
\begin{equation}\begin{CD}
H^{3}_{\text{\'et}} (Y_{k'}, \qit_{\ell}/\zit_{\ell}(2)) @>>> H^{4}_{\text{\'et}} (Y_{k'}, \zit_{\ell}(2))[\ell^{\infty}]\\
@VVV @V{AJ}VV\\
H^{3}_{\text{\'et}} (Y_{\overline{k}}, \qit_{\ell}/\zit_{\ell}(2))^{G_{k'}} @>{\partial}>>  H^{1}(G_{k'}, H^{3}_{\text{\'et}} (Y_{\overline{k}}, \zit_{\ell}(2)))[\ell^{\infty}]
\end{CD}\label{key-CD}\end{equation}
To see this, note that the vertical maps arise from the Hochschild-Serre spectral sequence (the right map being well-defined since $H^{4}_{\text{\'et}} (Y_{\overline{k}}, \zit_{\ell}(2))$ is torsion-free). Then, there is a short exact sequence in the Ekedahl category of \'etale sheaves over $Y_{k'}$:
\[ 0 \to \zit_{\ell}(2) \to \qit_{\ell}(2) \to \qit_{\ell}/\zit_{\ell}(2) \to 0 \]
Let $\pi: Y_{k'} \to \text{Spec } k'$ be the structure morphism; applying $R\pi_{*}$ to this gives the boundary map:
\[ R\pi_{*}\qit_{\ell}/\zit_{\ell}(2) \to R\pi_{*}\zit_{\ell}(2) [1] \] 
which induces the horizontal maps in (\ref{key-CD}). There is a corresponding map of spectral sequences; for the $E_{2}$-page, this means there are commutative diagrams:
\[\begin{CD}
H^{p} (G_{k'}, H^{q}_{\text{\'et}} (Y_{\overline{k}}, \qit_{\ell}/\zit_{\ell}(2)) @>>> H^{p+1} (G_{k'}, H^{q}_{\text{\'et}} (Y_{\overline{k}}, \qit_{\ell}/\zit_{\ell}(2))\\
@V{d_{2}^{p,q}}VV @V{d_{2}^{p+1,q}}VV\\
H^{p+2} (G_{k'}, H^{q-1}_{\text{\'et}} (Y_{\overline{k}}, \qit_{\ell}/\zit_{\ell}(2)) @>>> H^{p+3} (G_{k'}, H^{q-1}_{\text{\'et}} (Y_{\overline{k}}, \qit_{\ell}/\zit_{\ell}(2))
\end{CD} 
\]
as well as for the other pages. We deduce then that (\ref{key-CD}) commutes, as desired.
\end{proof}
\begin{Rem} We should note that the commutativity of (\ref{composition}) is essentially an algebraic version of Prop. 3.7 in \cite{B}.
\end{Rem}

\noindent To finish the proof, we observe that Bloch's cycle class map is injective by a consequence of the Merkurjev-Suslin Theorem (see \cite{MS}). Moreover, the map $\partial$ in (\ref{composition}) is an isomorphism because of a weights argument given in \cite{S3} Lemma 1.1.3. By (\ref{composition}), it follows that the direct limit of cycle class maps:
\[CH^{2} (X_{\overline{k}} \times X_{\overline{k}})[\ell^{\infty}] \to \mathop{\lim_{\longrightarrow}}_{[k': k] <\infty} H^{4}_{\text{\'et}} (X_{k'} \times X_{k'}, \zit_{\ell}(2))_{0} \]
is injective. Since surfaces satisfy the Lefschetz standard conjecture, it follows from Theorem \ref{main} \ref{main-b} that $\Gamma$ is nilpotent in the right group and, hence, also in the left by injectivity.

\end{document}